\documentclass[12pt, a4paper]{article}
\usepackage[latin1]{inputenc}
\usepackage[english]{babel}
\usepackage{amsmath,verbatim,amsthm,mathrsfs,stmaryrd}
\usepackage[english]{babel}
\usepackage{indentfirst}
\usepackage{amstext}
\usepackage{enumerate}
\usepackage{amsfonts}
\usepackage{textcomp}
\usepackage{amssymb}
\usepackage[dvips]{graphicx}
\usepackage{setspace}
\usepackage{color}
\usepackage{graphicx}
\graphicspath{ {./images/} }
\usepackage[all]{xy}
\newcommand{\field}[1]{\mathbb{#1}}

\newcommand {\R}{\mathbb{R}}
\newcommand {\N}{\mathbb{N}}

\newcommand {\Z}{\mathbb{Z}}

\newcommand{\ZZ}{\field{Z}}
\newcommand{\NN}{\field{N}}

\newcommand{\bgln}{\begin{eqnarray}} 
\newcommand{\egln}{\end{eqnarray}}
\newcommand{\bgl}{\begin{equation}} 
\newcommand{\egl}{\end{equation}}

\newcommand{\al} {\alpha}       
        
\newcommand{\ga} {\gamma}

      \newcommand{\La}{\Lambda}

\newcommand{\ro}{\rho}

\newcommand{\si} {\sigma}

\newtheorem{teorema}{Theorem}[section]
\newtheorem{lemma}[teorema]{Lemma}
\newtheorem{corollary}[teorema]{Corollary}
\newtheorem{definition}[teorema]{Definition}
\newtheorem{proposition}[teorema]{Proposition}
\newtheorem{example}[teorema]{Example}
\newtheorem{remark}[teorema]{Remark}
\newtheorem{theorem}[teorema]{Theorem}

\begin{document}

\title{Entropy for partial actions of $\Z$}


\author{Alexandre Baraviera, Ruy Exel, Daniel Gon\c{c}alves\footnote{Partially supported by Conselho Nacional de Desenvolvimento Cient\'ifico e Tecnol\'ogico (CNPq) grant numbers 304487/2017-1 and 406122/2018-0  and Capes-PrInt grant number 88881.310538/2018-01 - Brazil.}, \\ Fagner B. Rodrigues and Danilo Royer}


\date{}
\maketitle


\begin{abstract} In this paper we introduce the definition of topological entropy for a partial $\mathbb{Z}$-action
on metric spaces.
We show that the definition of partial entropy is an extension of the definition
of topological entropy for a $\mathbb Z$-action. We also prove that the partial
topological entropy is concentrated on the non-wandering set.
\end{abstract}

\section{Introduction}
The purpose of this note is to explore the concept of entropy in the
partial action context. Topological entropy is an invariant 
of a dynamical system (action of the additive group $\ZZ$)
that is constant for topologically equivalent actions, and
a considerable effort has been made during the recent past
in order to successfully extend it for more general group actions,
see for example
\cite{Bis,BiU,L.Bowen,L.Bowen2,L.Bowen3,Bufetov,Bufetov2,CRV,GLW,Ghys,Katok,VaFa}.

A discrete time dynamical system consists of a nonempty set $X$
and a map
$f:X\to X$.
 For a given dynamical system, the main goal is to understand
the discrete time evolution of  points $x\in X$, say,
to study some properties of the set called the orbit, defined
as $\mathcal O(x)=\{f^n(x):n\in \NN \}$ (or,  in the case where
the map $f$ is invertible, $\mathcal O(x)=\{f^n(x):n\in \ZZ \}$).
If the map $f$ is a homeomorphism we can see that the dynamical system
corresponds to a continuous
action of the additive group $\ZZ$ on the set $X$, both having the
same orbits.

 In an intuitive view topological entropy counts, in some sense,
the number of distinguishable orbits over time,
thereby providing an idea of how complex the orbit structure of a system is. For example, entropy distinguishes a dynamical system where points that are close together remain close
from a dynamical system in which groups of points move farther away from each other.


If a homeomorphism $f : A \to f(A)$ is such that $f(A) \not\subset A$, even though the
orbit of a point may not be well-defined, we can consider pieces of orbits,
as long as iterating by $f$ makes sense.
In this setting,
partial group actions arise naturally; however, an extension of the concept
of topological entropy for this setting is still missing. 
In this paper, we consider partial actions of the additive group
$\ZZ$ and study properties it shares with the usual topological entropy associated with dynamical systems.




This work is organized as follows: after reviewing some basic concepts about partial actions
(section 2), we define a partial topological entropy for $\ZZ$ partial actions acting on clopen sets of  metric spaces (section 3), establish some of
its properties (section 4),
 and
then we present  a result analogous to Bowen's for actions
of $\mathbb Z$, showing that the partial entropy
is concentrated on the (partial) non-wandering set (section 5).


\section{Partial action of the additive group $\ZZ$}
\begin{definition}\label{def:partial-action}
Let $X$ be a topological space.
A partial action of $\ZZ$ on $X$ is a pair
$$
\al=(\{X_n\}_{n\in\ZZ},\{\al_n\}_{n\in\ZZ}),
$$
consisting of a collection $\{X_n\}_{n\in\ZZ}$ of open subsets of $X$
and a collection of homeomorphisms $\{\al_n\}_{n\in\ZZ}$

 $$
      \al_n \colon X_{-n} \to X_n  \;,\; n \in \ZZ
 $$
such that \\
$(i)$ $X_0=X$ and the map $\al_0$ is the identity map on $X$, \\
$(ii)$ $\al_n(X_{-n}\cap X_{m}) = X_n \cap X_{n+m}$ and \\
$(iii)$ $\al_{n}(\al_m(x))=\al_{n+m}(x)$ for all $x\in X_{-m}\cap X_{-m-n}$.
\end{definition}

Partial actions arise in multiple settings, see \cite{Misha} for an overview. One possible way to produce examples of partial actions is by  restricting an action to a subset:
\begin{example}
\emph{(The restriction of a global $\ZZ$-action)}
Let $\beta:\mathbb Z\times X\to X$ be a global action and  be $Y$ be an open subset
of $X$. Consider $\alpha$ as the restriction of $\beta$ to $Y$, that is: $Y_t = Y\cap\beta_t(Y)$, and
$\alpha_t : Y_{-t} \to Y_ {t}$ such that $\al_t(x) = \beta_t(x)$, $\forall t \in\ZZ$, $x\in X_{-t}$.
 It is not difficult  to verify that $\alpha$ is a partial action on $Y$.
 \end{example}

 A concrete example of the idea above is a global action
 on the sphere $S^2$ that can be restricted to an open subset $Y$
 (homeomorphic to an open square) in what is well known as the horseshoe,
 an example introduced by Smale in the 60's and that is a
 paradigm of the class of hyperbolic dynamical systems,
 see for example \cite{Brim}.
\begin{example}
\emph{(The horseshoe)} Here we just present the dynamics
restricted to the open unitary square of the plane, say
$Q = (0, 1) \times (0, 1)$. Then
$$
   f \colon (0, 1) \times (1/6, 2/6) \to (4/6, 5/6) \times (0, 1) \quad  (x, y) \mapsto ((x+4)/6, 6y-1)
$$
and
$$
   f \colon (0, 1) \times (4/6, 5/6) \to (1/6, 2/6) \times (0, 1) \quad (x, y) \mapsto ((x+1)/6, 6y-4)
$$
There exists a compact set $\La = \bigcap_{n \in \ZZ} f^n(Q)$ that is
homeomorphic to the symbolic space $\{0, 1 \}^{\ZZ}$ and where $f$ is
conjugate to the dynamical system $\si$ known as the shift, defined as
$(\si(x))_k = x_{k+1}$, where $x \in  \{0, 1 \}^{\ZZ}$ is $x= (\ldots, x_{-1}, x_0, x_1, x_2, \ldots)$.
This map has many interesting dynamical features: for example, it  has periodic points with any given period, has
dense orbits, and is expansive; for a more complete description, see \cite{Brim}.
\end{example}

 In \cite{MR1957674} Abadie  shows that the previous procedure (i.e., to obtain
 a partial action as a restriction of an action) is indeed very general
 since any partial action $(\alpha, G, X)$ can be, in some sense, made global in a suitable
 space. Moreover, he proves that the globalization is unique up to an equivalence
 map, which is the  identity map when restricted to $X$. By equivalence what is meant is exactly
 the following:

\begin{definition}\label{pqfotocoordenadores}
Let $G$ be a group, and suppose that, for each $i=1,2$ we are given a
partial action $(\{X^i_g\}_{g\in G},\{\theta_g^i\}_{g\in G})$ of $G$
on a topological space $X^i$. A continuous map
$$
\phi:X^1\to X^2
$$
will be said $G$-equivariant when, for all $g\in G$, one has that
\\
$(i)$ $\phi(D^1_g)\subset D_g^2$, and \\
$(ii)$ $\phi(\theta_g^1(x))=\theta_g^2(\phi(x))$ for all $x\in D^1_{g^{-1}}$.\\
If moreover $\phi$ is bijective and $\phi^{-1}$
is also $G$-equivariant, we will say that $\phi$
is an equivalence of partial actions. If such an equivalence exists, we will say
that $\theta^1$ is equivalent to $\theta^2$.
\end{definition}

 For the convenience of the reader, we recall Abadie's construction of the globalization in the
 particular case of a $\ZZ$-partial action.

   First, consider the (global) action
$$
   \ga \colon \ZZ \times \ZZ \times X  \to \ZZ \times X
$$
given by $\ga_t(s, x) = (t+s, x)$ for any $t, s \in \ZZ$ and $x \in X$. Now
consider the equivalence relation defined by the following identification:
$$
  (r, x) \sim (s, y) \Leftrightarrow x \in X_{s-r} \; \text{and} \; \al_{r-s}(x) = y
$$

Then take $X^e= (\ZZ \times X) / \sim $ with the quotient topology; the action of $\ga$ preserves
the equivalence classes, and so defines a continuous
global action of $\ZZ$ on
$X^e$ that we denote by $\al^e$. The set $X^e $ is the space of orbits
of $X$ under $\al^e$.

\begin{remark}
By the $\ZZ$-equivariance of the equivalence map we have that if a partial action has
two different globalizations then they
correspond to two topologically conjugate dynamical systems. In particular, this implies that
they have the same topological entropy.
More details on partial actions and their enveloping actions can be found, for example, in \cite{Ruy,EGG}.
\end{remark}

\section{The entropy}

From now on we assume that $\al=(\{X_n\}_{n\in \Z},\{\al_n\}_{n\in \Z})$ is a partial action on a metric space $(X,d)$. We denote the set of natural numbers $\{1,2, \ldots\}$ by $\N$ and the set of non-zero integers by $\Z^*$. 
 
  For $n\in\NN$ and $x\in X$ we
define the following sets
\begin{align}\label{def:I_n}
 I_n(x)=\{i\in\{0,1,\dots, n-1\}:x\in X_{-i}\} \text{ and }\\
 I_{-n}(x)=\{i\in\{0,-1,\ldots, -n+1\}:x\in X_{-i}\}.
 \end{align}
Since $X_0=X$, $I_n(x)\not=\emptyset$ for any $n\in\mathbb{Z}^*$ and $x\in X$.
Then we define a map $d_n:X\times X\to[0, +\infty)$, for $n\in\mathbb{Z}^*$, by
 $$
 d_n(x,y)=\max_{i\in I_n(x)\cap I_n(y)} d(\alpha_i(x),\alpha_i(y)).
 $$
We notice that if $I_n(x)\cap I_n(y)=\{0\}$  then $d_n(x,y)=d(x,y)$.
Another important remark is the fact that $d_n$ may not be a metric, because it is not possible
to guarantee the triangle inequality, as the next example shows. 

\begin{example}\label{contraexemplo}
Consider the partial action of the integers on the interval $[0,1]$ given by $X_{-n}=[0,\frac{1}{2^n})$, $X_{n}=[0,1)$, and $\alpha_n (x) = 2^n x$, for all interger $n\geq 1$. Notice that $d_3(0,\frac{6}{25})= \frac{24}{25}$. On the other hand $d_3(0,\frac{1}{4})+d_3(\frac{1}{4},\frac{6}{25}) = \frac{1}{2}+\frac{1}{50}= \frac{13}{25}$. Hence the triangle inequality is not satisfied for $d_3$. 
\end{example}

Even if we require the domains of the partial action to be clopen, $d_n$ may still fail the triangle inequality, as we show in the example below.

\begin{example}\label{siri} Let $X=\prod \{0,1\}$, with $d((x_n),(y_n)) = \frac{1}{2^i}$, if $x_n=y_n$ for $1\leq n \leq i$ and $x_{i+1}\neq y_{i+1}$ and $d((x_n),(y_n)) = 0$ if $x_n=y_n$ for all $n$. We will define a partial action on $X$. For this, let $[0^n]=\{(x_j)\in X: x_j = 0, \ 1\leq j \leq n\}$ and define $X_{-n}=[0^{3n}]$ and $X_n=X$, for every $n\in \N$. Of course $X_0=X$. The homeomorphism $\alpha_n:X_{-n} \rightarrow X_n$, for $n\in \N$, is defined by $\alpha_n((x_i)_{i\in \N})= (x_{i+3n})_{i\in \N} $. Now, let $x=000111\ldots$, $y=0000111\ldots$ and $z=00111\ldots$. Then $d_2(x,y)=1$ but $d_2(x,z)= \frac{1}{4}$ and $d_2(z,y)= \frac{1}{4}$.

\end{example}

\begin{remark} Notice that, although the triangle inequality does not hold in general, it holds for those $x,y$ and $z$ $\in $ $X$ such that $I_n(x)=I_n(y)=I_n(z)$.
 \end{remark}

\begin{definition} Let $(X,d)$ be a metric space, $\al=(\{X_n\}_{n\in \Z},\{\al_g\}_{n\in \Z})$ be a partial action, and let $K$ be a compact subset of $X$. Given $\varepsilon>0$ and $n\in\Z^*$ we say that:
\begin{itemize}
\item a subset $A\subset K$ is $(n,\varepsilon)$-separated
if for every $x\neq y \in A$, either $I_n(x)\neq I_n(y)$ or $d_n(x,y)\geq\varepsilon$ in case $I_n(x)=I_n(y)$.
\item a subset $B\subset K$ is $(n,\varepsilon)$-spanning (for $K$)
if for every $x\in K$ 
there exists $y\in B$ such that $I_n(x)=I_n(y)$ and $d_n(x,y)<\varepsilon$.
\item an open cover $\mathcal U$ of $K$ is an $(n,\varepsilon)$-cover if
for any $U\in\mathcal U$ and any $x,y\in U$, we have $I_n(x)=I_n(y)$ and $d_n(x,y)<\varepsilon$.
\end{itemize}
\end{definition}

\noindent\textbf{\emph{Standing Hypothesis:}} From now on, all partial actions in this paper are assumed to act on clopen sets.

\vspace{0.5pc}

With the clopen hypothesis above we can prove the following key lemmas.

\begin{lemma}
Let $\al=(\{X_n\}_{n\in \Z},\{\al_n\}_{n\in \Z})$ be a partial action on a metric space $X$. Then, for a fixed $x\in X$ and $n\in \Z$, the set 

$$\{y\in X: I_n(y)=I_n(x)\}$$ is clopen.
\end{lemma}

\begin{proof} Fix $n\in \Z$. Let us suppose $n\geq 1$. Define a map $I:X\rightarrow \{0,1\}^{n-1}$ by $I(y)=(z_0,...,z_{n-1})$, where $z_m=1$ if $m\in I_n(y)$, and $z_m=0$ otherwise. We show that $I$ is continuous (where we consider $\{0,1\}^{n-1}$ with the product topology): let $(y_k)_{k\in \N}$ be a sequence converging to $y\in X$. Let $0\leq i\leq n-1$ be such that $y\in X_{-i}$. Since $X_{-i}$ is open, there exists $k_{i_1}\in \N$ such that $y_k\in X_{-i}$ for $k\geq k_{i_1}$. For $0\leq i\leq n-1$, with $y\notin X_{-i}$, there also exists $k_{i_2}$ such that $y_k\notin X_{-i}$ for each $k\geq k_{i_2}$, since $X\setminus X_i$ is open. Hence there exists $k_0\in \N$ such that for $k>k_0$, and for each $0\leq i\leq n-1$, $y_k\in X_{-i}$ if and only if $y\in X_{-i}$. This shows that $I(y_k)\rightarrow I(y)$.

Now, fix $x\in X$, and notice that $$\{y\in Y:I_n(y)=I_n(x)\}=\{y\in Y:I(y)=I(x)\}=I^{-1}(I(x)).$$
Since $I(x)$ is clopen and $I$ is continuous then $\{y\in Y:I_n(y)=I_n(x)\}$ is clopen.

The case $n<0$ follows analogously.

\end{proof}

\begin{lemma}\label{AvaiVenceuUma!}
Let $\al=(\{X_n\}_{n\in \Z},\{\al_n\}_{n\in \Z})$ be a partial action on a metric space $X$, and let $K$ be a compact subset of $X$.
\begin{enumerate}
\item For each $n\in \ZZ^*$ and $\varepsilon>0$ there exists a finite $(n,\varepsilon)$-cover of $K$.

\item For any $\varepsilon>0$ and $n\in \mathbb \ZZ^*$  there exist a finite $(n,\varepsilon)$-spanning set of $K$.
\end{enumerate}
\end{lemma}

\begin{proof} 
To prove the first item let $n\in\ZZ^*$ and $\varepsilon>0$. Since $K$ is compact, to show the existence of a finite $(n,\varepsilon)$-cover of $K$ it is enough to show that there exists some $(n,\varepsilon)$-cover of $K$. Define, for each
$x\in K$, the sets
$$  U(x,n,\varepsilon):=\{y\in X: I_n(y)=I_n(x) \mbox{ and }d_n(x,y)<\varepsilon\}.$$
Notice that $K\subseteq\bigcup\limits_{x\in K}U(x,n,\varepsilon)$ and $U(x,n,\varepsilon)$ is open
for each $x\in K$, since \[U(x,n,\varepsilon)= (\bigcap_{i\in I_n(x)} \alpha_{i^{-1}}\left( \mathcal{B}(\alpha_i(x),\varepsilon) \cap X_i \right))\cap \{y\in X:I_n(y)=I_n(x)\},\]
where $\mathcal{B}(\alpha_i(x),\varepsilon)$ denotes the $d-$ball of radius $\varepsilon$ centered at $\alpha_i(x)$.
Since $K$ is compact, there exists a finite subcover
$(U(x_i,n,\varepsilon))_{i=1}^\kappa$ of the cover $(U(x,n,\varepsilon))_{x\in K}$.

To prove the second item, notice that the set $\{x_1,...,x_\kappa\}$ (with $x_i$ as before) is a finite $(n,\varepsilon)$-spanning set.
\end{proof}

Let $x\in X$ and $n\in\ZZ$.
Based on the proof above we define the \emph{partial dynamical ball} of radius $\varepsilon$ as the set
\begin{align}\label{eq:spanning-cover}
U(x,n,\varepsilon):=(\bigcap_{i\in I_n(x)} \alpha_{i^{-1}}\left( \mathcal{B}(\alpha_i(x),\varepsilon) \cap X_i \right))\bigcap \{y\in X:I_n(y)=I_n(x)\}.
\end{align}
Notice that given $\varepsilon>0$ and $n\in\mathbb N$, the family $(U(x,n,\varepsilon))_{x\in X}$ defines
an open cover of $X$.

\begin{definition} Let $\al=(\{X_n\}_{n\in \Z},\{\al_n\}_{n\in \Z})$ be a partial action on a metric space $X$ and let $K$ be a compact subset of $X$. We define:
\begin{itemize}

\item $\mbox{sep}(n, \varepsilon, \alpha,K )$ as being the maximum cardinality of an $(n,\varepsilon)$-separated set in $K$,
\item $\mbox{span}(n, \varepsilon, \alpha,K )$ as being the minimum cardinality
of an $(n,\varepsilon)$-spanning set in $K$,
\item $\mbox{cov}(n,\varepsilon,\alpha,K)$ as being the minimum
cardinality of an $(n,\varepsilon)$-cover of $K$. 
\end{itemize}
\end{definition}

\begin{remark} It follows from the second item of Lemma~\ref{AvaiVenceuUma!} that $\mbox{cov}(n,\varepsilon,\alpha,K)$ is finite, for each $n\neq 0$ and $\varepsilon>0$.

\end{remark}

Next, we describe a relation between the numbers defined above, which will be used in the characterization of the partial entropy.

\begin{lemma}\label{lemma:key-lemma} Let $\al=(\{X_n\}_{n\in \Z},\{\al_n\}_{n\in \Z})$ be a partial action on a metric space $X$, and let $K$ be a compact subset of $X$. Then, for any $\varepsilon>0$ and any $n\in\Z^*$,
$$\mbox{cov}(n, 2\varepsilon, \alpha,K )\leq \mbox{span}(n, \varepsilon, \alpha,K )\leq \mbox{sep}(n, \varepsilon, \alpha,K )\leq\mbox{cov}(n, \varepsilon, \alpha,K ).$$
\end{lemma}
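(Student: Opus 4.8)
The plan is to establish separately the three inequalities
\[
\mbox{sep}(n,\varepsilon,\alpha,K)\le\mbox{cov}(n,\varepsilon,\alpha,K),\quad \mbox{cov}(n,2\varepsilon,\alpha,K)\le\mbox{span}(n,\varepsilon,\alpha,K),\quad \mbox{span}(n,\varepsilon,\alpha,K)\le\mbox{sep}(n,\varepsilon,\alpha,K),
\]
following the classical scheme used for topological entropy, but keeping track of the domain pattern $I_n$ at every step. By the previous lemma all three quantities are finite (a finite $(n,\varepsilon)$-cover exists, so separated sets have bounded cardinality and the relevant maxima are attained), and concatenating the three bounds yields the statement.

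For $\mbox{sep}\le\mbox{cov}$ I would argue by pigeonhole. Fix a $(n,\varepsilon)$-separated set $E\subset K$ of maximal cardinality and a $(n,\varepsilon)$-cover $\mathcal U$ of $K$ of minimal cardinality. If some $U\in\mathcal U$ contained two distinct points $x,y\in E$, the definition of a $(n,\varepsilon)$-cover would force $d_n(x,y)<\varepsilon$, contradicting $d_n(x,y)\ge\varepsilon$. Hence choosing for each $x\in E$ an element of $\mathcal U$ containing it defines an injection $E\hookrightarrow\mathcal U$, so $|E|\le|\mathcal U|$.

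For $\mbox{cov}(n,2\varepsilon,\alpha,K)\le\mbox{span}(n,\varepsilon,\alpha,K)$ I would use the open sets from \eqref{eq:spanning-cover}. Let $\{x_1,\dots,x_\kappa\}$ be a $(n,\varepsilon)$-spanning set with $\kappa=\mbox{span}(n,\varepsilon,\alpha,K)$; by the previous lemma each $U(x_i,n,\varepsilon)$ is open, and by the spanning property they cover $K$. Moreover, if $y,z\in U(x_i,n,\varepsilon)$ then $I_n(y)=I_n(z)=I_n(x_i)$, so $I_n(y)\cap I_n(z)=I_n(x_i)$, and for every $j\in I_n(x_i)$ the triangle inequality in $(X,d)$ gives $d(\alpha_j(y),\alpha_j(z))\le d(\alpha_j(y),\alpha_j(x_i))+d(\alpha_j(x_i),\alpha_j(z))<2\varepsilon$; taking the maximum over $j$ yields $d_n(y,z)<2\varepsilon$. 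Thus $\{U(x_i,n,\varepsilon)\}_{i=1}^{\kappa}$ is a $(n,2\varepsilon)$-cover of cardinality $\kappa$.

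Finally, for $\mbox{span}(n,\varepsilon,\alpha,K)\le\mbox{sep}(n,\varepsilon,\alpha,K)$ I would take a $(n,\varepsilon)$-separated set $E\subset K$ of maximal cardinality and try to show it is $(n,\varepsilon)$-spanning: for $x\in K$ either $x\in E$ (take $y=x$, since $d_n(x,x)=0$), or, by maximality, $E\cup\{x\}$ is not $(n,\varepsilon)$-separated, producing $y\in E$ with $d_n(x,y)<\varepsilon$. The point requiring care — and the one I expect to be the main obstacle — is that a $(n,\varepsilon)$-spanning set must additionally satisfy $I_n(x)=I_n(y)$, which is not automatic from $d_n(x,y)<\varepsilon$ alone, since $d_n$ ignores the indices at which only one of the two points is defined. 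I would handle this by working level set by level set: decompose $K$ into the finitely many pieces $K_S=\{x\in K:I_n(x)=S\}$, on each of which $d_n$ restricts to a genuine metric equivalent to $d$, choose a maximal $(n,\varepsilon)$-separated set inside each $K_S$ so that the obstructing point inherits the domain pattern $S$, and then verify that these pieces assemble into a single $(n,\varepsilon)$-separated subset of $K$ whose cardinality bounds $\mbox{span}(n,\varepsilon,\alpha,K)$ from above. It is precisely this last compatibility check across different domain patterns that is the delicate part of the argument.
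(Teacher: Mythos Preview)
Your treatments of $\mbox{sep}\le\mbox{cov}$ and $\mbox{cov}(n,2\varepsilon)\le\mbox{span}(n,\varepsilon)$ coincide with the paper's. For $\mbox{span}\le\mbox{sep}$ the paper simply takes a maximal $(n,\varepsilon)$-separated set $B$, notes that every $y\in K\setminus B$ satisfies $d_n(x,y)<\varepsilon$ for some $x\in B$, and declares $B$ to be $(n,\varepsilon)$-spanning --- precisely the step you flag, since the spanning definition also requires $I_n(x)=I_n(y)$. So you are being more careful here than the paper itself.

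Your proposed repair, however, cannot succeed, because the inequality is in fact false as stated. The union $E=\bigcup_S E_S$ of per-stratum maximal separated sets is indeed $(n,\varepsilon)$-spanning, but points lying in different strata need not be $(n,\varepsilon)$-separated from one another: $d_n$ compares them only over the \emph{intersection} of their index sets. For a concrete failure, restrict the translation action $\beta_n(x)=x+n$ on $\RR$ to $Y=(0,3)$, so that $X_{-1}=(0,2)$; take $K=\{1.9,\,2.1\}$, $n=2$, $\varepsilon=1/2$. Then $I_2(1.9)=\{0,1\}$ and $I_2(2.1)=\{0\}$, so any $(2,\varepsilon)$-spanning set must contain both points and $\mbox{span}=2$; yet $d_2(1.9,2.1)=d(1.9,2.1)=0.2<\varepsilon$, so $\mbox{sep}=1$. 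Thus the ``compatibility check across different domain patterns'' you single out as delicate is not merely delicate but genuinely obstructed: with the paper's definitions the middle inequality of the lemma does not hold in general, and neither the paper's argument nor your level-set decomposition can close the gap without modifying one of the two notions.
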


\begin{proof}
Let $A\subseteq K$ be a $(n,\varepsilon)$-spanning set. Consider the family
$\mathcal U=\{U(x,n,\varepsilon)\}_{x\in A}$, where $U(x,n,\varepsilon)$
is given by \eqref{eq:spanning-cover}.
 We claim that
$\mathcal{U}$ is an $(n,2\varepsilon)$-cover of $K$.  In fact, since $A$ is an
$(n,\varepsilon)$-spanning set, $\mathcal{U}$ covers $K$. If $z_1,z_2\in U\in\mathcal{U}$,
then there exists $x\in A$ so that $U=U(x,n,\varepsilon)$ and hence
$I_n(z_1)=I_n(z_2)=I_n(x)$. This implies that we can compute
$\alpha_j(z_i)$ for each $j\in I_n(x)$, $i=1,2$.
As a consequence, we get that
$$
d_n(z_1,z_2)\leq d_n(z_1,x)+d_n(x,z_2)<2\varepsilon.
$$
This proves the claim and implies that $\mbox{cov}(n, 2\varepsilon, \alpha,K )\leq \mbox{span}(n, \varepsilon, \alpha,K )$.

For the third inequality, let  $B\subseteq K$ be an $(n, \varepsilon)$-separated set, and $\mathcal{U}$ be a finite $(n,\varepsilon)$-cover. 
If the cardinality of $B$ is greater than 
the cardinality of $\mathcal{U}$
then there exist $U\in\mathcal{U}$
which contains, at least, two distinct elements of $B$. As the $d_n$-diameter of $U$ is less than $\varepsilon$, this contradicts the fact that $B$ is $(n,\varepsilon)$-separated. Hence the cardinality of $B$ is less or equal to $cov(n,\varepsilon,\alpha,K)$ and so $\mbox{sep}(n, \varepsilon, \alpha,K )\leq\mbox{cov}(n, \varepsilon, \alpha,K )$.

Finally, for the second inequality, let $B\subseteq K$ be an $(n, \varepsilon)$-separated set of maximal cardinality (which exists because $\mbox{sep}(n, \varepsilon, \alpha,K )\leq\mbox{cov}(n, \varepsilon, \alpha,K )$ and $\mbox{cov}(n, \varepsilon, \alpha,K )$ is finite). 
Notice that, for every $y\in K\backslash B$, there exists $x\in B$ such that $I_n(x)=I_n(y)$, since otherwise, if there exists $y\in K\backslash B$ such that $I_n(x)\neq I_n(y)$ for all $x\in B$, then $B\cup \{y\}$ is an $(n,\varepsilon)$-separated set, what contradicts the maximality of $B$. Furthermore, for all $y\in K\backslash B$, there exists $x\in B$ such that $d_n(y,x)<\varepsilon$, since otherwise $B\cup \{y\}$ is again an $(n,\varepsilon)$-separated set, contradicting the maximality of $B$. We conclude that $B$ is an
 $(n, \varepsilon)$-spanning set, and hence $\mbox{span}(n, \varepsilon, \alpha,K )\leq \mbox{sep}(n, \varepsilon, \alpha,K )$.

\end{proof}

\begin{remark} It follows from the third inequality of the previous lemma that each $(n,\varepsilon)$-separated set of $K$ is finite.
\end{remark}

\begin{definition}\label{defentropy}Let $\al=(\{X_n\}_{n\in \Z},\{\al_n\}_{n\in \Z})$ be a partial action on a metric space $X$, where each $X_n$ is clopen.
\begin{enumerate}
    
\item For each $K\subseteq X$ compact define
$$
h_{\varepsilon}^+(\alpha,K,d)=\limsup_{n\to\infty}\frac{1}{n}\log \mbox{sep}(n,\varepsilon,\alpha,K), \text{ and}$$
$$
h_{\varepsilon}^-(\alpha,K,d)=\limsup_{n\to-\infty}\frac{1}{-n}\log \mbox{sep}(n,\varepsilon,\alpha,K).$$
\item For each $K\subseteq X$ compact define
$$
h_{d}^+(\alpha,K)=\lim_{\varepsilon\to0}h^+_{\varepsilon}(\alpha,K,d) \text{\  and \ } h_{d}^-(\alpha,K)=\lim_{\varepsilon\to0}h^-_{\varepsilon}(\alpha,K,d).
$$
\item 
Define the partial topological entropy of the partial action $\alpha$ as
\begin{equation}
\hbar_{d}(\alpha)=\max\left\lbrace \sup_{K\subseteq X}h^+_{d}(\alpha,K), \sup_{K\subseteq X}h^-_{d}(\alpha,K)\right\rbrace.
\end{equation}
\end{enumerate}
\end{definition}

\begin{remark}\label{intervice} 
Notice that the quantity $\mbox{sep}(n,\varepsilon,\alpha,K)$ increases monotonically as $\varepsilon$ decreases, and so $h_{\varepsilon}^+(\alpha,K,d)$ and $h_{\varepsilon}^-(\alpha,K,d)$ do as well. Therefore the limits in the second item of the previous definition do exist. Furthermore, the inequalities in Lemma \ref{lemma:key-lemma}
imply that equivalent definitions of $\hbar_{d}(\alpha)$ can be obtained
 if we replace $\mbox{sep}(n, \varepsilon, \alpha,K)$ by
 $\mbox{span}(n,\varepsilon,\alpha,K)$ or
$\mbox{cov}(n,\varepsilon,\alpha,K)$ in the expressions above.
\end{remark}

If $\al=(\{X_n\}_{n\in \ZZ},\{\al_{n}\}_{n\in \ZZ})$ is a partial action, it follows directly from the definition of entropy that $\hbar_{d}(\al) = \hbar_{d}(\al^{-1})$, where $\al^{-1}$ is the inverse partial action, that is, $\al^{-1}=(\{X_{-n}\}_{n\in \ZZ},\{\al^{-1}_{n}\}_{n\in \ZZ})$. Furthermore, if the partial action $\al$ is a full action of $\Z$ then it has a generator $\al_1 = T$, which is a 
 continuous homeomorphism of $X$, and the entropy above defined coincides with the the usual topological entropy of the map $T$ (for more details see \cite{RB} and references therein).

Notice that it is important to consider both the forward and backward orbits of partial actions. This becomes clear in the following example.

\begin{example}
Let $X=\R$ and consider the action given by the homeomorphism $\alpha(x) = \frac{1}{2} x$. Then  $\sup_{K\subseteq X}\hbar^+_{d} =0$ and $ \sup_{K\subseteq X}\hbar^-_{d} = log 2$. Hence $ \hbar_{d}(\alpha)= log 2$. But, if the entropy was defined just in terms of forward orbits, the entropy would be zero.
\end{example}

Since we are defining a topological entropy, we would like our definition to be independent of the metric generating the topology. To develop on this topic, we need the following definition. 

\begin{definition}
We say that metrics $d_1$ and $d_2$ on $X$ are continuously equivalent if $Id_X:(X,d_1)\to (X,d_2)$ and $Id_X:(X,d_2)\to (X,d_1)$ are continuous. If both maps are uniformly continuous then we say that the metrics are uniformly equivalent.
\end{definition}

Notice that if $d_1$ and $d_2$ are 
continuously equivalent then a map $f:(X,d_1)\to (X,d_1)$ is continuous if and only $f:(X,d_2)\to (X,d_2)$ is continuous. Therefore, if $\alpha$ is a partial action on $(X,d_1)$, so each $\alpha_n$ is a homeomorphism, 
then $\alpha$ is partial action of $(X,d_2)$ and vice-versa.

 \begin{lemma}\label{invariance by the metric} Let $(\{\alpha_n\}_{n\in \ZZ},\{X_n\}_{n\in \ZZ})$ be a partial action on $(X,d)$ 
 and let $d'$ be a metric on $X$ such that $d$ and $d'$ are 
 uniformly equivalent metrics on 
 $X$. Then $\hbar_{d}(\al)=\hbar_{d'}(\al)$.
\end{lemma}
\begin{proof}

Let $K\subseteq X$
compact. 
Given $\varepsilon_1>0$, choose $\varepsilon_2\geq\varepsilon_3>0$, satisfying $\varepsilon_2<\varepsilon_1$ and such that
$d(x,y)\leq\varepsilon_1$ whenever $d'(x,y)\leq\varepsilon_2$, and
$d'(x,y)\leq\varepsilon_2$ whenever $d(x,y)\leq\varepsilon_3$, for every $x, y \in K$.  Then an $(n,\varepsilon_2)$-spanning set for $K$ with respect to $d'$ is
$(n,\varepsilon_1)$-spanning for $K$ with respect to $d$. Hence
$$
\mbox{span}(n,\varepsilon_1,\al,K)_d\leq\mbox{span}(n,\varepsilon_2,\al,K)_{d'},$$ where $\mbox{span}(n,\varepsilon_1,\al,K)_d$ is the minimum cardinality of the $(n,\varepsilon_1)$-spanning sets of $K$ with respect to the metric $d$.

Similarly
$$
\mbox{span}(n,\varepsilon_2,\al,K)_{d'}\leq\mbox{span}(n,\varepsilon_3,\al,K)_d.
$$
Applying Definition \ref{defentropy} and Remark \ref{intervice} and letting $n\to\infty$ we get
$$
h_{\varepsilon_1}^+(\alpha,K,d)\leq h_{\varepsilon_2}^+(\alpha,K,d')\leq h_{\varepsilon_3}^+(\alpha,K,d).
$$ 
Letting $\varepsilon_1\to 0$, 
$$
h_{d}^+(\al,K)=h_{d'}^+(\al,K),
$$
and a similar argument shows that  $h_{d}^-(\al,K)=h_{d'}^-(\al,K)$. Hence $\hbar_d(\alpha)=\hbar_{d'}(\alpha)$.
\end{proof}


\begin{remark}\label{vamosleao} 
Notice that uniform equivalence of the metrics is crucial in the lemma above. For an example in which the entropy is not preserved by continuously equivalent metrics, see \cite[Remark~15, pag. 171]{Walters}.
\end{remark}

We prove below that entropy is an invariant for uniform equivalence of partial actions (by uniform equivalence we mean that the map implementing the equivalence, and its inverse, are uniformly continuous).



\begin{corollary}
Let $\al$ and  $\beta$ be partial actions on the
metric spaces $(X,d)$ and $(Y,\ro)$, respectively. Let $\phi:X\rightarrow Y$ be an equivalence (as in Definition~\ref{pqfotocoordenadores}) between $\al$ and $\beta$ such that both $\phi$ and $\phi^{-1}$ are uniformly continuous.
Then $\hbar(\alpha)=\hbar(\beta)$.
\end{corollary}
\begin{proof}
 Define a metric $d'$ on $Y$ by setting $d'(y_1,y_2)=d(x_1,x_2)$, where $\phi(x_i)=y_i$ for $i=1,2$. It is clear that $d'$ is
a metric on $Y$ that is uniformly equivalent to $\ro$. Moreover, $\phi$ is an isometry from $(X,d)$ to $(Y,d')$, $I_n(x)=I_n(\phi(x))$ for all $x$ and all $n$, and $d_n(x,y)=d'_n(\phi(x),\phi(y))$ for all $x,y \in X$. Since the entropy is independent of uniformly equivalent metrics, it follows that $\hbar_{d}(\al)=\hbar_{d}(\beta)$.
\end{proof}

\subsection{The entropy of a homeomorphism}
 We are now in a position to answer a question raised in the introduction:
how to define a notion of topological entropy
for a homeomorphism $f \colon A \to f(A)\subset X$.  The main obstacle is the fact that, in general,
it is not possible to consider full orbits for points in $A$;
on the other hand, a partial action can be defined and its
corresponding partial topological entropy makes sense.

  Take $A$ a clopen subset of $X$
 and $f \colon A \to f(A)\subset X$ a homeomorphism such that $f(A)$ is also clopen in $X$. This induces a partial action $\al=(\{X_{-n}\},\{f^n\})_{n\in\ZZ}$ of $\ZZ$ by defining $X_{-n} =\text{dom}(f^n)$, which is clopen, for all $n\in\mathbb Z$. Then we define the entropy of the homeomorphism $f$ as the partial entropy of this particular partial action (see \cite{Ruy1,GGS} for the proof that this is a partial action).

\begin{remark} The reader can verify that if there exists $N\in\mathbb N$ such that $X_n=\emptyset$, for all $n\in\mathbb Z$  with $|n|\geq N$, then the entropy
 of the homeomorphism is zero. In particular, this is the case when $A \cap f(A) = \emptyset$,
 a situation where there is no dynamics in the usual sense.
\end{remark}

An interesting example of the above definition is the case of minimal homeomorphisms of the Cantor set. In this context, using Bratteli-Vershik models, Boyle and Handelman answered (in the negative) the question of whether entropy is an invariant for strong orbit equivalence  (see \cite{BH}) and computed entropy for examples, including the dyadic adding machine. The restrictions of minimal homeomorphisms of the Cantor set to clopen sets forms, therefore, a class of examples where partial topological entropy can be applied. Due to the notation required and in light of the space required, we leave it to the interested reader to further develop the topic.


\section{Properties}

In this section, we establish some properties of the partial entropy. 
We suppose that $X$ is a metric space with a given metric $d$, $\al=(\{X_n\}_{n\in\ZZ},\{\al_n\}_{n\in\ZZ})$ is a partial action with each $X_n$ clopen, and $Y \subseteq X$ is a $\alpha$-invariant subset of $X$, that is, $$\al_n(Y\cap X_{-n}) \subseteq Y\cap X_n$$ for all $n \in \ZZ$. Let $\varphi$ be the restriction of $\alpha$ to $Y$, that is, \[\varphi=\alpha|_{Y}:=(\{X_n\cap Y\}_{n\in\ZZ},\{\al_n\}_{n\in\ZZ}),\] which is a $\ZZ$ partial action in $Y$.

\begin{proposition}\label{subaction}
Let $X,Y$, $\alpha$ and $\varphi$ be as above. Then $\hbar_{d}(\varphi)\leq \hbar_{d}(\alpha)$.
\end{proposition}
\begin{proof}
 First note that each $K\subseteq Y$ compact in $Y$ is compact in $X$. Moreover, for $K\subseteq Y$ compact and  $n\in \Z$, $sep(n,\varepsilon, \alpha, K)=sep(n,\varepsilon, \varphi, K)$. Therefore $h_\varepsilon^+(\alpha, K, d)=h_\varepsilon^+(\varphi, K, d)$, and $h_\varepsilon^-(\alpha, K, d)=h_\varepsilon^-(\varphi, K, d)$. Since there are more compact sets in $X$ than in $Y$ then $\hbar_d(\alpha)\geq \hbar_d(\varphi)$.
 
\end{proof}

 \begin{proposition}\label{prop:lower-bound-entropy}
Let $\alpha=(\{\alpha_n\}_{n\in \Z},\{X_n\}_{n\in \Z})$ be a $\Z$ partial action on a metric space $X$, with each $X_n$ clopen, and let $A_i$, for $i = 1, \ldots, k$, be $\alpha$-invariant closed subsets of $X$, whose union is $X$. Then 
$$
\hbar_{d}( \alpha ) =\max\{\hbar_{d}( \alpha|_{A_i}):1\leq i\leq k\}.
$$
\end{proposition}
\begin{proof} From Proposition~\ref{subaction} we get that $h_{d}( \alpha ) \geq\max h_{d}( \alpha|_{A_i}).$ We show the converse inequality.
Let $K\subset X$ be a compact set and  $Y\subset K$ be a $(n,\varepsilon)$-separated
set in $K$. Let $K_i=K\cap A_i$, which is compact, since $A_i$ is closed, and let $Y_i=Y\cap A_i$. Notice that $Y_i$ is $(n,\varepsilon)$-separated in $K_i$.
Then 
$$
\mbox{sep}(n,\varepsilon,\alpha,K)\leq \sum\limits_{i=1}^k\mbox{sep}(n,\varepsilon,\alpha|_{A_i},K_i)
\leq k\max_{1\leq i\leq k}\mbox{sep}(n,\varepsilon,\alpha|_{A_i},K_i).
$$ 
Hence we have that
\begin{align*}
h_\varepsilon^+(\alpha,K,d)&=\limsup_{n\to\infty}\frac{1}{n} \log \mbox{sep}(n,\varepsilon,\alpha,K)\\
                           &\leq\limsup_{n\to\infty}\frac{1}{n} \log(k\max_{1\leq i\leq k}\mbox{sep}(n,\varepsilon,\alpha|_{A_{i}},K_{i}))\\
                           &=\max_{1\leq i\leq k}\limsup_{n\to\infty}\frac{1}{n} \log\mbox{sep}(n,\varepsilon,\alpha|_{A_{i}},K_{i})\\
                           &=\max_{1\leq i\leq k}h_\varepsilon^+(\alpha|_{A_i}, K_i,d)\\
                           &\leq \max_{1\leq i\leq k}h_d^+(\alpha|_{A_i}, K_i)\\
                           &\leq \max_{1\leq i\leq k}\hbar_d(\alpha|_{A_i})
\end{align*}
Letting $\varepsilon\to0$ we obtain $h_d^+(\alpha, K)\leq \max\limits_{1\leq i\leq k}\hbar_d(\alpha|_{A_i})$. Similarly one shows that $h_d^-(\alpha, K)\leq \max\limits_{1\leq i\leq k}\hbar_d(\alpha|_{A_i})$. Therefore
$$\hbar_d(\alpha)\leq \max\limits_{1\leq i\leq k}\hbar_d(\alpha|_{A_i}).$$
\end{proof}

\subsection{An upper bound for the partial entropy}
In order to compare the partial topological entropy of a partial action
and the topological entropy of its globalization, we need to have some
information about the topology of the quotient space. More precisely,
there is no guarantee that the quotient space obtained in the  globalization
of a partial action is a metric space. Actually, \cite[Proposition~2.10]{MR1957674} shows that $X^e$ is a Hausdorff space if, and only if,
the graph of $\alpha $ is a closed subset of $G\times X\times X$, and in \cite[Proposition~2.1]{EGG} a characterization is given for partial actions on second countable, Hausdorff spaces.

In general the partial topological entropy
is smaller than the topological entropy of the globalization, as illustrated in the example below.

\begin{example}
Let $X=\{0,1\}^{\mathbb Z}$ be the Bernoulli space and consider on $X$ the shift map
$\sigma(x)_i=x_{i+1}$. We can think of $\sigma:X\to X$ as a $\mathbb Z$-action. It is well
known that the topological entropy of $\sigma$ is $\log2$. Let $Y$ be the cylinder
$[01]$ and define a partial action $(\alpha,\mathbb{Z}, X)$ given by the restriction of $\sigma$
to $Y$. As $\sigma ([01])=[1]$, we have that $\alpha_{n}(Y)\cap Y=\emptyset$ for all $n\in\mathbb N$,
 and it implies that $h_{d}(\alpha)=0$. 
\end{example}

\begin{theorem}
Let $\al=(\{X_n\}_{n\in\ZZ},\{\al_n\}_{n\in\ZZ})$ be a partial action, with each $X_n$ clopen, and let $\al^e:\ZZ\times X^e\to X^e$ be
its globalization. Suppose that $X^e$ is metrizable by a metric $d^e$, which restricted to $X$
is continuously equivalent to the metric $d$ of $X$. If
$h_{d^e}(\al^e)$ is the
topological entropy of $\al^e$, then
$$
h_{d}(\al)\leq h_{d^e}(\al^e).
$$
\end{theorem}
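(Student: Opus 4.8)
The plan is to push separated sets forward through the canonical embedding of $X$ into its globalization. Let $\iota\colon X\to X^e$ be the map $\iota(x)=[(0,x)]$, the composition of $x\mapsto(0,x)$ with the quotient map $\ZZ\times X\to X^e$. First I would record the two elementary facts about $\iota$. It is continuous, being a composition of continuous maps, so $\iota(K)$ is a compact subset of $X^e$ for every compact $K\subset X$; and it is injective, since $\iota(x)=\iota(y)$ forces $(0,x)\sim(0,y)$, i.e.\ $\al_0(x)=x=y$. Identifying $X$ with $\iota(X)\subset X^e$, the hypothesis says that $d$ and the metric $d^e|_X$ on $X$ given by $d^e|_X(a,b)=d^e(\iota(a),\iota(b))$ are uniformly equivalent. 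The second and crucial observation is the intertwining identity
$$
\al^e_i\bigl(\iota(x)\bigr)=\iota\bigl(\al_i(x)\bigr)\qquad\text{whenever }i\ge 0\text{ and }x\in X_{-i},
$$
which is immediate from the definitions: $\al^e_i([(0,x)])=[\ga_i(0,x)]=[(i,x)]$ and $(i,x)\sim(0,\al_i(x))$ because $x\in X_{-i}=X_{0-i}$ and $\al_{i-0}(x)=\al_i(x)$. In particular, writing $T=\al^e_1$ for the generator of $\al^e$, for $p=\iota(x)$ and every $i\in I_n(x)$ the point $T^ip$ lies in $\iota(X)$ and equals $\iota(\al_i(x))$.

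Next I would fix a compact $K\subset X$ and $\varepsilon>0$, and use the uniform equivalence to choose $0<\delta=\delta(\varepsilon)\le\varepsilon$, with $\delta(\varepsilon)\to 0$ as $\varepsilon\to 0$, such that $d(a,b)\ge\varepsilon$ implies $d^e(\iota(a),\iota(b))\ge\delta$ for all $a,b\in X$. Let $A\subset K$ be an $(n,\varepsilon)$-separated set for $\al$. For distinct $x,y\in A$ we have $d_n(x,y)\ge\varepsilon$, so there is some $i^\ast\in I_n(x)\cap I_n(y)\subset\{0,\dots,n-1\}$ with $d(\al_{i^\ast}(x),\al_{i^\ast}(y))\ge\varepsilon$. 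By the intertwining identity, $T^{i^\ast}\iota(x)=\iota(\al_{i^\ast}(x))$ and $T^{i^\ast}\iota(y)=\iota(\al_{i^\ast}(y))$, so $d^e(T^{i^\ast}\iota(x),T^{i^\ast}\iota(y))\ge\delta$, whence the Bowen distance $d^e_n(\iota(x),\iota(y))\ge\delta$. Thus $\iota(A)$ is an $(n,\delta)$-separated set for the global action $\al^e$ inside the compact set $\iota(K)$, and since $\iota$ is injective $\sharp\iota(A)=\sharp A$. Therefore
$$
\mbox{sep}(n,\varepsilon,\al,K)\le\mbox{sep}(n,\delta(\varepsilon),\al^e,\iota(K))\qquad\text{for all }n\ge 1.
$$

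To finish, I would apply $\limsup_{n\to\infty}\tfrac1n\log(\cdot)$ to this inequality, obtaining $h_\varepsilon(\al,K,d)\le h_{\delta(\varepsilon)}(\al^e,\iota(K),d^e)\le\hbar_{d^e}(\al^e,\iota(K))$ for every $\varepsilon>0$; letting $\varepsilon\to 0$ gives $\hbar_d(\al,K)\le\hbar_{d^e}(\al^e,\iota(K))\le\hbar_{d^e}(\al^e)$. Since $\al^e$ is a genuine global $\ZZ$-action, by the Remark following the definition of $\hbar$ its partial topological entropy $\hbar_{d^e}(\al^e)$ coincides with the usual topological entropy $h_{d^e}(\al^e)$. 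Taking the supremum over all compact $K\subset X$ then yields $\hbar_d(\al)\le h_{d^e}(\al^e)$.

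I expect the only delicate point to be the separated-set estimate. The partial Bowen gauge $d_n(x,y)$ records only the coordinates $i\in I_n(x)\cap I_n(y)$, i.e.\ exactly those indices for which $\al_i$ is defined at both $x$ and $y$; but this is precisely the range where the intertwining identity applies, so the corresponding coordinates of the full $\al^e$-orbits of $\iota(x)$ and $\iota(y)$ stay inside $\iota(X)$, where the uniform-equivalence comparison between $d$ and $d^e$ is available. For the remaining coordinates $i\in\{0,\dots,n-1\}\setminus(I_n(x)\cap I_n(y))$ the orbits may leave $\iota(X)$, but those only help, since $d^e_n$ is a maximum over all such $i$. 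The other steps — continuity and injectivity of $\iota$, the quotient computation behind the intertwining identity, and the monotonicity manipulations in the passage to the limits — are routine.
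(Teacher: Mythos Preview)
Your argument is correct and is essentially the paper's approach unfolded: the paper's proof is a one-line appeal to Proposition~\ref{prop:lower-bound-entropy}, viewing $\alpha$ as the restriction of the global (hence partial) action $\alpha^e$ to $X\subset X^e$, which amounts precisely to your observation that an $(n,\varepsilon)$-separated set for $\alpha$ in $K$ becomes an $(n,\delta)$-separated set for $\alpha^e$ in $\iota(K)$. Your version is in fact more careful than the paper's, since you make explicit the intertwining identity $\alpha^e_i\circ\iota=\iota\circ\alpha_i$ on $X_{-i}$ and the use of the uniform-equivalence hypothesis to pass from $d$ to $d^e$ (a step the paper leaves implicit), while the paper's citation of Proposition~\ref{prop:lower-bound-entropy} technically requires $X$ to be closed in $X^e$ and the two metrics to coincide, neither of which is assumed.
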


\begin{proof}
  The proof follows from Proposition \ref{prop:lower-bound-entropy}, since $X\subset X^e $,
$\al^e(X)=X$, and $\alpha$ can be seen as the restriction of the partial action $\al^e$.
\end{proof}

\section{Entropy and nonwandering sets}

In \cite{BowenWandering} Bowen 
showed that for a global $\mathbb Z$-action on a compact metric space the topological entropy is concentrated
on the nonwandering set. This result, in a certain sense, shows that if one intends to study 
how chaotic is the dynamics of a system, the nonwandering set is the right place to do it. In what follows we will prove an analogous result for semi-saturated partial actions. We recall the definition of semi-saturated partial actions of $\Z$ below.

\begin{definition} \cite[Definition~4.9]{Ruy}
A partial action $\al=(\{X_n\}_{n\in\ZZ},\{\al_n\}_{n\in\ZZ})$ is called semi-saturated if $\alpha_{m+n}=\alpha_m\alpha_n$ for all $m,n\geq 0$ and $m,n\leq 0$. 
\end{definition}

\begin{remark}
 Semi-saturated partial actions are related to Quigg and Raeburn's notion of multiplicativity, see \cite{QR}, and appear often in the study of partial actions, see \cite{Toke, CW, Ruy, GR, GR1} for example.
\end{remark}

We define wandering and nonwandering points as follows.

\begin{definition} Let $(\{X_n\}_{n\in \Z}, \{\alpha_n\}_{n\in \Z})$ be a partial action. We say that a point $x\in X$ is wandering if there exists an open set $U\subseteq X$, with $x\in U$, such that $\alpha_n(U\cap X_{-n})\cap U=\emptyset$ for all $n\in \Z^*$.

A point $x\in X$ is nonwandering if it is not  wandering, that is, if for each open set $U\subseteq X$, with $x\in U$, there exists some $n\in \Z^*$ such that $\alpha_n(U\cap X_{-n})\cap U\neq \emptyset$. We denote by $\Omega(\alpha)$ the set of all nonwandering points.
\end{definition}

We want to restrict a partial action to the set of the nonwandering points. For this, we need the following result.

\begin{proposition}\label{prop:omega-invar}
The set $\Omega(\al)$ is $\alpha$-invariant and closed.
\end{proposition}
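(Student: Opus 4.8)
The plan is to establish the two assertions separately. For $\alpha$-invariance I would in fact prove the sharper identity $\al_m(\Omega(\al)\cap X_{-m})=\Omega(\al)\cap X_m$ for every $m\in\ZZ$; for closedness I would show that the complement $X\setminus\Omega(\al)$ is open.

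For the invariance, fix $m\in\ZZ$ and $x\in\Omega(\al)\cap X_{-m}$, together with a witnessing sequence $n_i\to\infty$ with $x\in X_{-n_i}$ and $\al_{n_i}(x)\to x$. Set $y=\al_m(x)\in X_m$. I would argue, in this order: (i) for all large $i$ one has $\al_{n_i}(x)\in X_{n_i}\cap X_{-m}$, because $X_{-m}$ is open and $\al_{n_i}(x)\to x\in X_{-m}$; applying $\al_{-n_i}$ and axiom $(ii)$ then gives $x\in X_{-n_i}\cap X_{-n_i-m}$, and applying axiom $(ii)$ to $\al_m$ on $X_{-m}\cap X_{-m-n_i}$ gives $y\in X_m\cap X_{-n_i}$, so $\al_{n_i}(y)$ makes sense; (ii) the associativity axiom $(iii)$, applied twice, yields $\al_{n_i}(y)=\al_{n_i}(\al_m(x))=\al_{n_i+m}(x)=\al_m(\al_{n_i}(x))$ for all large $i$; (iii) since $\al_m$ is continuous at $x$ and $\al_{n_i}(x)\to x$, the last term converges to $\al_m(x)=y$, so the tail of $(n_i)$ witnesses $y\in\Omega(\al)$. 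This gives $\al_m(\Omega(\al)\cap X_{-m})\subseteq\Omega(\al)\cap X_m$; applying the same with $-m$ in place of $m$ and then the bijection $\al_m\colon X_{-m}\to X_m$ (whose inverse is $\al_{-m}$) yields the opposite inclusion, hence equality, and in particular $\Omega(\al)$ is partially-invariant.

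For closedness I would first rephrase membership: $x\in\Omega(\al)$ iff for all $N\in\NN$ and $\varepsilon>0$ there is $n\geq N$ with $x\in X_{-n}$ and $d(\al_n(x),x)<\varepsilon$ (one builds the sequence by taking $N=i$, $\varepsilon=1/i$). Thus $x\notin\Omega(\al)$ iff there exist $N$ and $\varepsilon>0$ such that for every $n\geq N$ one has $x\notin X_{-n}$ or $d(\al_n(x),x)\geq\varepsilon$. To show this is an open condition at such an $x$, take a small ball $V$ around $x$: for an index $n\geq N$ with $x\in X_{-n}$, shrinking $V$ and using uniform continuity of $\al_n$ keeps $d(\al_n(y),y)\geq\varepsilon/2$ for all $y\in V$, so such $n$ do not help any $y\in V$. \emph{The main obstacle is the remaining indices}, those $n\geq N$ with $x\notin X_{-n}$: since $X_{-n}$ is only open (not closed), a point $y$ near $x$ may well belong to $X_{-n}$, so $\al_n(y)$ is defined although $\al_n(x)$ is not, and the family $\{\al_n\}_{n\ge N}$ carries no common modulus of continuity with which to control all of these simultaneously. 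I expect essentially all the work of the proposition to lie here: one natural route is to shrink $V$ so that for each $n$ the set $\{y\in V:y\in X_{-n}\}$ is either empty or all of $V$ and then run the preceding estimate on the stabilized index set, the delicate point being the compatibility of such a stabilization with letting $n\to\infty$ (which is where an additional hypothesis, e.g. compactness of $X$, may well be needed).
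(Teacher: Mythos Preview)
Your invariance argument is essentially the paper's, carried out with more care: the paper fixes $i_1$ with $x\in X_{-i_1}$, uses openness of $X_{-i_1}$ to get $\al_{n_j}(x)\in X_{-i_1}$ for large $j$, and then continuity of $\al_{i_1}$ together with the partial-action axioms to conclude $\al_{i_1+n_j}(x)=\al_{i_1}(\al_{n_j}(x))\to\al_{i_1}(x)$. You additionally verify the domain conditions needed to invoke axiom~(iii) and prove the reverse inclusion to get the equality $\al_m(\Omega(\al)\cap X_{-m})=\Omega(\al)\cap X_m$, which the paper leaves implicit.

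For closedness the paper does \emph{not} attempt the pointwise $\varepsilon$--$\delta$ analysis you sketch. It asserts in one line that if $y\notin\Omega(\al)$ then there is an open $V\ni y$ with $\al_n(V\cap X_{-n})\cap V=\emptyset$ for all but finitely many $n\in\NN$, and then observes that every $z\in V$ fails the recurrence condition, whence $V\subset X\setminus\Omega(\al)$. The second step is fine; the first is precisely the obstacle you isolate, and with $\Omega(\al)$ defined as in the paper (pointwise recurrence: $\al_{n_i}(x)\to x$ along some $n_i\to\infty$) it is simply not true in general. Already for global actions this fails: on the full two-shift $\{0,1\}^{\ZZ}$ the point $y=\dots000.111\dots$ is non-recurrent, yet every cylinder neighbourhood of $y$ meets its $\sigma^n$-image for all large $n$, and $y$ is a limit of periodic (hence recurrent) points---so the recurrent set is not closed. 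Thus your hesitation is well founded, and no compactness hypothesis will rescue the argument; the paper's proof glosses over exactly the difficulty you identified rather than resolving it. The line the paper writes \emph{is} correct if one reads $\Omega(\al)$ as the partial \emph{non-wandering} set (every open $V\ni x$ satisfies $\al_n(V\cap X_{-n})\cap V\neq\emptyset$ for infinitely many $n$); under that reading closedness is immediate from the neighbourhood formulation, and this is presumably the intended meaning, consistent with both the terminology and the use of $\Omega(\al)$ in the subsequent Bowen-type theorem.
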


\begin{proof} Let $x\in X\setminus \Omega(\alpha)$, and let $U\subseteq X$ be an open set with $x\in U$ and $\alpha_n(U\cap X_{-n})\cap U=\emptyset$. Then each element $y\in U$ is also wandering. Therefore $X\setminus \Omega(\alpha)$ is open, and so $\Omega(\alpha)$ is closed.

Now we show that $\Omega(\alpha)$ is $\alpha-$ invariant. Let $x\in \Omega(\alpha)\cap X_{-n}$ with $n\in \Z$, and let $V\subseteq X$ be an open set containing $\alpha_n(x)$. Let $V'=V\cap X_n$ and define $U=\alpha_{-n}(V')$, which is an open set containing $x$. Then there exists $m\in \Z^*$ such that $\alpha_m(U\cap X_{-m})\cap U\neq \emptyset$, that is, $\alpha_m(\alpha_{-n}(V')\cap X_m)\cap \alpha_{-n}(V')\neq \emptyset$ and so $\alpha_n[\alpha_m(\alpha_{-n}(V')\cap X_{-m})]\cap V'\neq \emptyset$. Notice that
\begin{align*}
    \alpha_n[\alpha_m(\alpha_{-n}(V')\cap X_{-m})]              &=\alpha_n[\alpha_m(\alpha_{-n}(\alpha_n(\alpha_{-n}(V')\cap X_{-m})))]\\
    &\subseteq\alpha_n[\alpha_m(\alpha_{-n}(V'\cap X_{n-m}))]\\
    &=\alpha_{m}(V'\cap X_{-m-n})\\ &\subseteq\alpha_{m}(V'\cap X_{-m}).
\end{align*}

As $ \alpha_n[\alpha_m(\alpha_{-n}(V')\cap X_{-m})]\cap V'\neq \emptyset$ and $ \alpha_n[\alpha_m(\alpha_{-n}(V')\cap X_{-m})]\subseteq \alpha_m(V'\cap X_{-m})
$, 
we obtain that
$$\alpha_m(V'\cap X_{-m})\cap V'\neq \emptyset.$$ 
Since $V'\subseteq V$, it follows that

$$\alpha_m(V\cap X_{-m})\cap V\neq \emptyset.$$ 
Therefore $\alpha_n(x)\in \Omega(\alpha)$.
\end{proof}

\begin{remark}
In the definition of the set of nonwandering points of a partial action, and in the proposition above, it is not necessary to assume that the domains of the partial action are clopen.
\end{remark}

Since $\Omega(\alpha)$ is $\alpha$-invariant, it makes sense to consider the restriction $\alpha_{|_{\Omega(\alpha)}}$ (see the paragraph above Proposition~\ref{subaction}). The following theorem is the main result of this section. 

\begin{theorem}\label{nonwandering}
	Let $\al=(\{X_n\}_{n\in\ZZ},\{\al_n\}_{n\in\ZZ})$ be a semi-saturated partial action, with each $X_n$ clopen, on a  compact	metric space $(X,d)$. Then $h_{d}(\al)=h_{d}(\al|_{\Omega(\al)}).$
\end{theorem}
\begin{proof} 
It follows from Proposition \ref{subaction} that $h_d(\alpha)\geq h_d(\alpha_{|_{\Omega(\alpha)}})$.
We prove the reverse inequality. Let $0\neq n\in \Z$ and $\varepsilon>0$. Let $A$ be an $(n,\varepsilon)$-spanning set of minimum cardinality contained in $\Omega(\alpha)$. For each $y\in A$ let $$U(y,n,\varepsilon)=\{x\in X:I_n(x)=I_n(y) \text{ and } d_n(x,y)<\varepsilon\},$$ which is open, and notice that $(U(y,n,\varepsilon))_{y\in A}$ is an open cover of $\Omega(\alpha)$, since $A$ is an $(n,\varepsilon)$-spanning set of $\Omega(\alpha)$. Let $U=\bigcup\limits_{y\in A}U(y,n,\varepsilon)$. Notice that $U^c=X\setminus U$ is compact, and $A$ is an $(n,\varepsilon)$-spanning set of $U$.

{\it Claim 1: For each $N\in \Z$ there exists $0<\beta\leq \varepsilon$ such that 
$$
\alpha_i(U(x,N,\beta)\cap X_{-i})\cap U(x,N,\beta)=\emptyset
$$ for each $i\neq 0$ and $x\in U^c$.} 

To prove this claim, suppose that for each $\beta>0$ there exists $x\in U^c$ and $i\in \Z^*$ such that $\alpha_i(U(x,N,\beta)\cap X_{-i})\cap U(x,N,\beta)\neq \emptyset$.
Then, for each $m\in \N$, let $x_m\in U^c$ and $i_m\in \Z^*$ be such that  
$$
\alpha_{i_m}(U(x_m,N,\frac{1}{m})\cap X_{-i_m})\cap U(x_m,N,\frac{1}{m})\neq \emptyset.
$$ 
Since $U^c$ is compact, there exists a subsequence $(x_{m_k})_{k\in \N}$ of $(x_m)_{m\in \N}$ such that $x_{m_k}\rightarrow z\in U^c$. Let $V\subseteq X$ be an open set with $z\in V$. So, there exists $k_0$ such that $B(x_{m_{k_0}},\frac{1}{m_{k_0}})\subseteq V$ and, as $U(x_{m_{k_0}},N,\frac{1}{m_{k_0}})\subseteq B(x_{m_{k_0}},\frac{1}{m_{k_0}})$, we have that $U(x_{m_{k_0}},N,\frac{1}{m_{k_0}})\subseteq V$. Since $$
\alpha_{i_{m_{k_0}}}(U(x_{m_{k_0}},N,\frac{1}{m_{k_0}})\cap X_{-i_m})\cap U(x_{m_{k_0}},N,\frac{1}{m_{k_0}})\neq \emptyset
$$ and  $U(x_{m_{k_0}},\frac{1}{m_{k_0}})\subseteq V$, we obtain
$$
\alpha_{i_{m_{k_0}}}(V\cap X_{-i_{m_{k_0}}})\cap V\neq \emptyset.
$$ 
Hence $z$ is nonwandering, which is impossible since the elements of $U^c$ are all wandering points. This proves Claim 1.

Let $B$ be an $(n,\beta)$-spanning set of minimum cardinality of $U^c$, and define $C=A\cup B$, which is an $(n,\varepsilon)$-spanning set of $X$. 

Let $l\in \N$ and define  $\varphi_l:X\rightarrow (C\cup \{\star\})^l$, where $\star$ is just a symbol, by $\varphi_l(x)=(y_0,..,y_{l-1})$ where $y_i$ is chosen as follows. 

Case $n>0$: for $0\leq i\leq l-1$, if $x\in X_{-in}$ and $\alpha_{in}(x)\in U$, then there exists some $u_i\in A$ such that $d_n(\alpha_{in}(x), u_i)<\varepsilon$ and $I_n(\alpha_{in}(x))=I_n(u_i)$. Let $y_i:=u_i$ in this case; if $x\in X_{-in}$ and $\alpha_{in}(x)\in U^c$, then there exists some $v_i\in B$ such that $d_n(\alpha_i(x), v_i)<\varepsilon$ and $I_n(\alpha_{in}(x))=I_n(v_i)$. Let $y_i:=v_i$ in this case; if $x\notin X_{-in}$ define $y_i:=\star$. 

Case $n<0$: for $0\leq i\leq l-1$, if $x\in X_{in}$ and $\alpha_{-in}(x)\in U$, then there exists some $u_i\in A$ such that $d_n(\alpha_{-in}(x), u_i)<\varepsilon$ and $I_n(\alpha_{-in}(x))=I_n(u_i)$. Let $y_i:=u_i$ in this case; if $x\in X_i$ and $\alpha_{-in}(x)\in U^c$, then there exists some $v_i\in B$ such that $d_n(\alpha_{-in}(x), v_i)<\varepsilon$  and $I_n(\alpha_{-in}(x))=I_n(v_i)$. Let $y_i:=v_i$ in this case; if $x\notin X_{-in}$ define $y_i:=\star$.

Before we continue with the proof, we notice that the definition of the function $\varphi_l$ depends on the choice of the $u_i$ and the $v_i$ (which might not be unique). Nevertheless, independently of the choice of the $u_i$ and $v_i$ (and hence of $\varphi_l)$, the key point is that any such function is injective on certain separated sets (as we prove below). We, therefore, fix a $\phi_l$ from now on.

{\it Claim 2: For each $x\in X$, each $y\in B$ appears at most one time in $\varphi_l(x)$.}

We prove the case $n>0$. The case $n<0$ is analogous. Suppose that there exists some $y\in B$ appearing two times in $\varphi_l(x)$. Then $y_i=y=y_j$ for some $i,j\in \{0,...,l-1\}$. As $\alpha_{in}(x)\in U(y,n,\beta)$ and $\alpha_{jn-in}(\alpha_{in}(x))=\alpha_{jn}(x)\in U(y,n,\beta)$, it follows that $\alpha_{jn-in}(U(y,n,\beta)\cap X_{in-jn})\cap U(y,n,\beta)\neq\emptyset$, which is impossible by Claim 1.

{\it Claim 3: For $n>0$ and $0<m\leq nl$, if $E$ is an $(m,2\varepsilon)$-separated set then the map $\varphi_l$ is injective on $E$.}

Let $m\leq ln$ and $x,x'\in X$ be $(m,2\varepsilon)$-separated points such that $\varphi_l(x)=\varphi_l(x')$. In particular, the first coordinate of  $\varphi_l(x)$ equals to the first coordinate of $\varphi_l(x)$. This implies that there exists some $y\in A\cup B$ such that $I_n(x)=I_n(y)=I_n(x')$, $d_n(x,y)\leq\varepsilon$, and $d_n(x',y)\leq \varepsilon$. For $i\in \{0,...,l-1\}$, we have that $x\in X_{-in}$ if and only if $x'\in X_{-in}$. Since $\varphi_l(x)=\varphi_l(x')$, for $i\in \{0,...,l-1\}$ with $x,x'\in X_{-in}$, it follows that $d_n(\alpha_{in}(x),y_i)< \varepsilon$ and $d_n(\alpha_{in}(x'),y_i)<\varepsilon$, from where $d(\alpha_{in}(x),y_i)< \varepsilon$ and $d(\alpha_{in}(x'),y_i)<\varepsilon$. Therefore $d(\alpha_{in}(x),\alpha_{in}(x'))<2\varepsilon$. Moreover, from the definition of $\varphi_l$,  $I_n(\alpha_{in}(x))=I_n(\alpha_{in}(x'))$. Now, take  $0<t<n$  and suppose that $x,x'\in X_{-in-t}$, for some $0\leq i < l$. As $\alpha$ is semi-saturated,  $\alpha_{t+in}(x)=\alpha_t(\alpha_{in}(x))$  and so $t\in I_n(\alpha_{in}(x))$. Using the fact that $I_n(\alpha_{in}(x))=I_n(\alpha_{in}(x'))=I_n(y_i)$ we obtain  
\begin{align*}
d(\alpha_{in+t}(x),\alpha_{in+t}(x'))&\leq d(\alpha_{in+t}(x),\alpha_t(y_i))+d(\alpha_t(y_i), \alpha_{in+t}(x'))\\
                                     & =d(\alpha_t(\alpha_{in}(x)),\alpha_t(y_i))+d(\alpha_t(y_i), \alpha_t(\alpha_{in}(x'))\\
                                     &\leq d_n(\alpha_{in}(x),y_i)+d_n(y_i, \alpha_{in}(x')\\
                                     &<2\varepsilon.
\end{align*}
From the last inequality, and from the fact that $I_n(x')=I_n(x)$, we conclude that $d_m(x,x')<2\varepsilon$, which is a contradiction, since $x,x'$ are $(m,2\varepsilon)$-separated.

{\it Claim 4: For $n<0$ and $ln\leq m<0$, the map $\varphi_l$ is injective on the set of all $(m,2\varepsilon)$-separated points.}

The proof of this claim is analogous to the previous one.

{\it Claim 5: Let $n\in \Z$, $l\in \N$  and let $m\in \Z$ such that $0<m\leq nl$ if $n>0$, or, $ln\leq m<0$ if $n<0$. Let $q=span(n,\beta, \alpha, U^c)$ and $p=span(n,\varepsilon,\alpha_{|_{\Omega(\alpha)}}, \Omega(\alpha))$, and let $E$ be an $(m,2\varepsilon)$-separated set  of maximum cardinality in $X$. Then it holds that $\#(\varphi_l(E))\leq (q+1)!l^qp^l$. }

For each $0\leq j$ let $I_j$ be the subset of the elements $\varphi_l(x)=(y_0,...,y_{l-1})$ of $\varphi_l(E)$ with exactly $j$ entries $y_i$ in $B$. Notice that for $j>q$ it follows from Claim~2 that $I_j=\emptyset.$ Notice that there are $\frac{q!}{j!(q-j)!}$ ways to chose $j$ elements in $B$ and there are $\frac{l!}{(l-j)!}$ ways of arranging each choice. Moreover, there are at most $p^{l-j}\leq p^l$ ways to choose the remaining elements. Therefore $$\#I_j\leq \frac{q!}{j!(q-j)!}\frac{l!}{(l-j)!}p^l.$$  
 Since $\frac{q!}{j!(q-j)!}\leq q!$ and $\frac{l!}{(l-j)!}\leq l^q$
 $$\#I_j\leq q!l^q p^l,$$ and so 
 
$$\#\varphi_l(E)=\sum\limits_{j=0}^p \#I_j\leq \sum\limits_{j=0}^q q!l^q p^l\leq (q+1)!l^q p^l.$$

By Claim 3 and Claim 4, $\varphi_l$ is injective on $E$, and so $\# E=\#\varphi_l(E)$, and therefore $\#E\leq (q+1)!l^q p^l$.

Now fix $n>0$, let $p$ and $q$ as in Claim 5, let $m>0$ and chose $l_m\in \N$ such that $n(l_m-1)<m\leq nl_m$ and let $E$ as in Claim 5.

Then 

\begin{align*}
\frac{\log(sep(m,2\varepsilon, \alpha, X))}{m}
            &=\frac{\log(\# E)}{m}\leq \frac{\log((q+1)!l_m^qp^{l_m})}{m}\\
            &\leq \frac{\log((q+1)!l_m^qp^{l_m})}{n(l_m-1)}\\ 
            &\leq\frac{1}{n(l_m-1)}(\log(q+1)!+q\log(l_m)+l_m\log(p)).
\end{align*}
 Notice that if $m\rightarrow \infty$, $l_m\rightarrow \infty$ and then 
 \begin{align*}
    h_{2\varepsilon}^+(\alpha,X,d)
            &=\limsup\limits_{m\rightarrow \infty}\frac{\log(sep(m,2\varepsilon, \alpha, X))}{m}\\
            &\leq \limsup\limits_{m\rightarrow \infty}\frac{1}{n(l_m-1)}(\log(q+1)!+q\log(l_m)+l_m\log(p))\\
            &=\frac{log(p)}{n}\\
            &=\frac{\log(span(n,\varepsilon, \alpha_{|_{\Omega(\alpha)}}, \Omega(\alpha)))}{n}\leq \frac{\log(sep(n,\varepsilon, \alpha_{|_{\Omega(\alpha)}}, \Omega(\alpha)))}{n}. 
 \end{align*}
   So, it holds that $$h_{2\varepsilon}^+(\alpha,X,d)\leq\frac{\log(sep(n,\varepsilon, \alpha_{|_{\Omega(\alpha)}}, \Omega(\alpha)))}{n}$$
   for each $n\in \N$, and then 
 \begin{align*}
     h_{2\varepsilon}^+(\alpha,X,d)&\leq\limsup\limits_{n\in \infty}\frac{\log(sep(n,\varepsilon, \alpha_{|_{\Omega(\alpha)}}, \Omega(\alpha)))}{n}\\
                                   &=h_\varepsilon^+(\alpha_{|_{\Omega(\alpha)}},\Omega(\alpha), d),
 \end{align*}
 for each $\varepsilon >0$. Therefore 
 $$
 h_d^+(\alpha, X)\leq h_d^+(\alpha_{|_{\Omega(\alpha)}},\Omega(\alpha)).
 $$
 
Analogously one shows that $$h_d^-(\alpha, X)\leq h_d^-(\alpha_{|_{\Omega(\alpha)}},\Omega(\alpha)),$$

 which guarantees that $h_d(\alpha)\leq h_d(\alpha_{|_{\Omega(\alpha)}})$ and ends the proof of the theorem.
\end{proof}

We finish the paper applying the above theorem to compute the partial entropy of the action defined in Example~\ref{siri}. 

\begin{example} 
Let $\alpha$ be the partial action of Example~\ref{siri}. It is straightforward to check that $\alpha$ is semi-saturated. Notice that the only nonwandering point in $X$ is $0^\infty:= 0 0 0 0 \ldots$, so that  $\Omega(\alpha)=\{0^\infty\}$. Hence, by Theorem~\ref{nonwandering}, we have that $h_d(\alpha)=0$.
\end{example}

 Alexandre Baraviera, Departamento de Matem\'atica Pura e Aplicada - IME,
 Universidade Federal do Rio Grande do Sul, Porto Alegre, 91509-900, Brazil
 
 Email: atbaraviera@gmail.com

\vspace{1pc}

Ruy Exel, Departamento de Matem\'atica, Universidade Federal de Santa Catarina, Florian\'opolis, 88040-900, Brazil.

Email: exel@mtm.ufsc.br

\vspace{1pc}

Daniel Gon\c{c}alves, Departamento de Matem\'atica, Universidade Federal de Santa Catarina, Florian\'opolis, 88040-900, Brazil.

Email: daemig@gmail.com

\vspace{1pc}

 Fagner B. Rodrigues, Departamento de Matem\'atica Pura e Aplicada - IME,
 Universidade Federal do Rio Grande do Sul, Porto Alegre, 91509-900, Brazil
 
 Email: fagnerbernardini@gmail.com

\vspace{1pc}
Danilo Royer, Departamento de Matem\'atica, Universidade Federal de Santa Catarina, Florian\'opolis, 88040-900, Brazil.

Email: danilo.royer@ufsc.br

\end{document}